\newcounter{intege}
\theoremstyle{plain}
\newtheorem{theorem}{Theorem}[section]
\newtheorem{corollary}[theorem]{Corollary}
\newtheorem{lemma}[theorem]{Lemma}
\theoremstyle{definition}
\newtheorem{conjecture}[theorem]{Conjecture}
\crefname{counterexample}{counterexample}{counterexamples}
\Crefname{counterexample}{Counterexample}{Counterexamples}
\crefname{conjecture}{conjecture}{conjectures}
\Crefname{conjecture}{Conjecture}{Conjectures}
\definecolor{lightgrey}{rgb}{0.7,0.7,0.7}
\definecolor{darkred}{rgb}{0.7,0,0} 
\newcommand{\darkred}{\color{darkred}} 
\newcommand{\C}{\mathcal C}
\newcommand{\family}{\mathcal F}
\DeclareMathOperator{\rank}{rank}
\DeclareMathOperator{\st}{st}
\newcommand{\defn}[1]{\emph{\darkred #1}} 
\subjclass[2010]{Primary 05E45; Secondary 52C10, 05D05}
\title{Short proof of two cases of Chv\'atal's conjecture}
\date{\today}
\author[J.~Olarte]{Jorge Olarte}
\author[F.~Santos]{Francisco Santos}
\author[J.~Spreer]{Jonathan Spreer}
\address[J.~Olarte and J.~Spreer]
{
Institut f\"ur Mathematik, Freie Universit\"at Berlin, Germany
}
\email{olarte@zedat.fu-berlin.de}
\email{jonathan.spreer@fu-berlin.de}
\address[F.~Santos]
{
Department of Mathematics, Statistics and Computer Science, University of Cantabria, Spain
}
\email{francisco.santos@unican.es}
\thanks{The three authors are supported by the Einstein Foundation, Berlin. F. Santos is also supported by grant MTM2014-54207-P of the Spanish  Ministry of Science.}
\keywords{
Erd\H{o}s-Ko-Rado property, 
Chv\'atal's Conjecture}
\begin{document}

\begin{abstract}
  In 1974 Chv\'atal conjectured that no intersecting family $\family$ in a downset can be larger than the largest star.
  In the same year Kleitman and Magnanti proved the conjecture when $\family$ is contained in the union of two stars,
  and Sterboul when $\rank(\family)\le 3$. 
  We give short self-contained proofs of these two statements.
\end{abstract}

\maketitle

\section{Introduction}

A \defn{downset}, \defn{hereditary set}, \defn{independence system} or \defn{(abstract) simplicial complex} $\C$ is a family of subsets of some finite ground set closed under taking subsets. Using nomenclature from simplicial complexes we call \defn{faces}  the elements of $\C$ and \defn{vertices}, \defn{edges} and \defn{triangles}, respectively, the faces  of sizes $1$, $2$ and $3$.
The \defn{star} of a vertex $a$, written $\st_\C(a)$, is the family of all faces containing $a$. It is an example of an \defn{intersecting family} in $\C$, that is, a set of faces that pairwise intersect.

Chv\'atal's 45-year-old conjecture, inspired by the classical result of Erd\H{o}s, Ko and Rado~\cite{EKR1961} for the complete, uniform complex $\binom{[n]}{\le k}$, states that stars always achieve the maximal cardinality among intersecting families in $\C$:

\begin{conjecture}[Chv{\'a}tal~\cite{Chv1974}]
  \label{chvatal}
  Let $\family$ be an intersecting family in a simplicial complex~$\C$. Then, there exists a vertex $a$ in~$\C$ such that 
  $|\family| \le |\st_\C(a)|$.
\end{conjecture}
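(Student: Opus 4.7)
The plan is induction on $n = |\vertices(\C)|$. The base cases $n \le 2$ are trivial. For $n \ge 3$ and an intersecting family $\family \subseteq \C$, I would pick a distinguished vertex $v$ and partition $\family = \family^+_v \sqcup \family^-_v$ according to whether a face contains $v$. Then $\family^+_v \subseteq \st_\C(v)$, while $\family^-_v$ is an intersecting family in the smaller complex $\del_\C(v)$. Applying the induction hypothesis to this deletion yields a vertex $a \neq v$ with $|\family^-_v| \le |\st_{\del_\C(v)}(a)|$, giving the bookkeeping inequality
\[
|\family| \;\le\; |\st_\C(v)| + |\st_{\del_\C(v)}(a)|.
\]
The goal is then to choose $v$ (and implicitly $a$) so that the right-hand side is dominated by $|\st_\C(b)|$ for some vertex $b$ of $\C$.

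A natural attempt is to take $v$ of minimum degree in $\C$, so that $|\st_\C(v)|$ is small and most of $\family$ sits in $\del_\C(v)$, and then to take $a$ of maximum degree. One would hope that the faces of $\st_\C(a) \setminus \st_{\del_\C(v)}(a)$, namely those through $a$ that also contain $v$, together with the slack in the inductive bound, absorb the contribution $|\st_\C(v)|$. Running parallel to this, a compression reduction should help: one may always replace $\family$ by its image under any shift $F \mapsto (F \setminus \{v_j\}) \cup \{v_i\}$ with $i < j$ whose image lies in $\C \setminus \family$, since the resulting family has the same cardinality and remains intersecting. One would try to reduce to the case where $\family$ is fully left-shifted with respect to a carefully chosen vertex order, and conclude that every face of $\family$ must share the minimal vertex.

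The main obstacle is that neither of these moves is sufficient in general. Unlike the classical Erd\H{o}s--Ko--Rado setting, where the ambient family $\binom{[n]}{\le k}$ is itself symmetric and closed under every compression, here $\C$ carries no symmetry and its degree sequence can be wildly irregular: a shifted face of $\family$ may simply leave $\C$, blocking the compression; and in the inductive bookkeeping the vertex $a$ supplied by the hypothesis need not coincide with the vertex $b$ maximising $|\st_\C(\cdot)|$, so the inequality can be tight for reasons the induction cannot see. This mismatch between $\family$ and the ambient $\C$ is the source of the 45-year difficulty of the conjecture, and any genuine proof must exploit extra structural features of $\C$ beyond pure intersection and shifting. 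The two partial results proved in this paper, for families contained in two stars and for families of rank at most three, can both be read as instances where precisely such additional structure lets the above program go through.
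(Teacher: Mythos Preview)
Your proposal is not a proof, and you yourself say so in its final paragraph. More importantly, the paper contains no proof of this statement either: it is stated as \Cref{chvatal}, an open conjecture, and the paper only establishes the two special cases recorded as \Cref{thm:maintwo} and \Cref{thm:main_intro}. So there is nothing to compare against.

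That said, your diagnosis of why the naive induction-plus-compression program stalls is accurate and worth recording. The bookkeeping inequality
\[
|\family| \le |\st_\C(v)| + |\st_{\del_\C(v)}(a)|
\]
is correct but useless without a mechanism forcing the right-hand side below some $|\st_\C(b)|$; the vertex $a$ delivered by the inductive hypothesis on $\del_\C(v)$ need not be the vertex of maximum star in $\C$, and the discrepancy $|\st_\C(a)| - |\st_{\del_\C(v)}(a)|$ (the faces through both $a$ and $v$) has no reason to dominate $|\st_\C(v)|$. Likewise, compressions $F \mapsto (F\setminus\{v_j\})\cup\{v_i\}$ can push faces outside $\C$, since $\C$ is an arbitrary downset with no symmetry to absorb the shift. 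These are exactly the obstructions that have kept the conjecture open; the paper's two partial results sidestep them not by repairing the induction but by exploiting the extra hypotheses directly (the two-star containment via the flipping operation of \Cref{lemma:flip}, and the rank-three case via an explicit structural analysis culminating in a six-vertex configuration).

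In short: you have correctly identified an approach and correctly identified why it fails, but what you have written is a discussion of the difficulty rather than a proof, and no proof of the full conjecture is currently known.
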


Note that there is no loss of generality in assuming that~$\C$ is the smallest downset containing $\family$ (in other words,~$\C$ is generated by $\family$). Throughout the article we assume this and give short proofs of the following old and very recent known cases. 

\begin{theorem}[Kleitman and Magnanti~\protect{\cite[Theorem 2]{Kle1974}}]
\label{thm:maintwo}
Let~$\family$ be an intersecting family contained in the union of two stars, $\family\subset \st(a) \cup \st(b)$.
Then $ |\family| \le \max( |\st(a)|, |\st(b)| )$.
\end{theorem}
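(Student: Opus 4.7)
My plan is to decompose $\family$ by incidence with $\{a,b\}$: set $\family_{ab} = \family \cap \st(a) \cap \st(b)$, $\family_a = \family \cap (\st(a) \setminus \st(b))$, $\family_b = \family \cap (\st(b) \setminus \st(a))$, and analogously write $\C$ as a disjoint union $\C_{ab} \sqcup \C_a \sqcup \C_b \sqcup \C_\emptyset$ of faces containing both, only $a$, only $b$, or neither. Then $|\st(a)| = |\C_a|+|\C_{ab}|$ and $|\st(b)| = |\C_b|+|\C_{ab}|$. If either $\family_a$ or $\family_b$ is empty, then $\family$ lies inside a single star and the bound is immediate; so I will assume both are nonempty. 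The essential structural input is the cross-intersection property: every $\sigma \in \family_a$ and $\tau \in \family_b$ share a vertex in $V \setminus \{a,b\}$, because $a \notin \tau$ and $b \notin \sigma$.

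Without loss of generality, assume $|\st(a)| \ge |\st(b)|$. Since $\family_a \cup \family_{ab} \subseteq \st(a)$ and these two sets are disjoint from one another and from $\family_b$, the goal reduces to constructing an injection $\phi : \family_b \hookrightarrow \st(a) \setminus (\family_a \cup \family_{ab})$, which would yield $|\family| \le |\st(a)|$. The natural candidate is the swap $\phi(\tau) = (\tau \setminus \{b\}) \cup \{a\}$: by construction it contains $a$ but not $b$, so it is automatically disjoint from $\family_{ab}$; and whenever $\phi(\tau)$ happens to lie in $\family_a$ the reverse swap recovers $\tau$, so $\phi$ defines an involutive pairing between the "colliding" parts of $\family_a$ and $\family_b$, which is harmless.

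The principal obstacle is showing that $\phi(\tau)$ is actually a face of $\C$, and handling injectivity cleanly when it is not. The cross-intersection property gives a partial workaround: for any $\sigma \in \family_a$, the set $\{a\} \cup (\sigma \cap \tau)$ is a subset of $\sigma \in \C$, hence a face in $\C_a$, so \emph{partial} swaps always land in $\C_a$. Upgrading these partial swaps into a coherent injection into $\C_a \setminus \family_a$ is where I expect the main work: one could try a Hall-type matching in the bipartite graph of $\family_b$ versus $\C_a \setminus \family_a$, with cross-intersection providing enough neighbors for each subfamily, or a direct double-count in which the asymmetry $|\st(a)| \ge |\st(b)|$ absorbs any remaining images. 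The cardinality hypothesis is crucial here: if instead $|\st(b)| \ge |\st(a)|$, the symmetric argument swaps in the opposite direction to inject $\family_a$ into $\st(b)$, and one always chooses the direction of the larger star.
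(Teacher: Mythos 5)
Your reduction is sound: after splitting $\family$ into $\family_a$, $\family_b$, $\family_{ab}$ and assuming $|\st(a)|\ge|\st(b)|$, it does suffice to inject $\family_b$ into $\st(a)\setminus(\family_a\cup\family_{ab})$. But the proof stops exactly where the difficulty begins, and you say so yourself (``this is where I expect the main work''). Two concrete problems remain unresolved. First, the swap $\phi(\tau)=(\tau\setminus\{b\})\cup\{a\}$ generally fails to be a face of $\C$ (e.g.\ $\family_a=\{ac\}$, $\family_b=\{bcde\}$: here $acde\notin\C$), and the fallback ``partial swap'' $\{a\}\cup(\sigma\cap\tau)$ depends on a choice of $\sigma\in\family_a$ and is badly non-injective --- many $\tau\in\family_b$ can produce the same set, and that set may itself lie in $\family_a$ (in the example above the partial swap of $bcde$ is $ac\in\family_a$). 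Second, the claim that collisions with $\family_a$ are ``harmless'' because $\phi$ pairs them involutively does not repair the count: if $\phi(\tau)\in\family_a$ then $\tau$ still needs an image in $\st(a)\setminus(\family_a\cup\family_{ab})$, and the pairing supplies none. Neither the Hall-type matching nor the double count you gesture at is carried out, and it is not evident that either goes through without essentially reproving the theorem; note that an injection of the required kind \emph{exists} for cardinality reasons precisely because the theorem is true, so exhibiting one is not obviously easier than the theorem itself.

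The paper avoids the global injection entirely. \Cref{lemma:flip} performs a local exchange at a single auxiliary vertex $v$: it deletes the sets of $R_b(v)$ and inserts $\{B\setminus v: B\in R_a(v)\}$, and the intersecting property is preserved because any surviving $B_1\ni v$ with $a\notin B_1$ already has $B_1\setminus v\in\family$. \Cref{coro:sizetwo} then iterates this flip, strictly increasing $|\family|$ or, in case of a tie, strictly decreasing the average face size, until $\family$ falls into a single star. If you want to salvage your approach, you would need to replace the one-shot swap by such an iterative compression, or else find a genuinely new argument producing the injection; as written, the central step is missing.
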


\begin{theorem}[Sterboul \protect{\cite[Theorem 2]{Ste1974}}]
\label{thm:main_intro}
Chv\'atal's conjecture holds if all elements of $\family$ have size three or less.
\end{theorem}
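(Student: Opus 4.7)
The plan is to argue by the minimum face size in $\family$, reducing to Theorem~\ref{thm:maintwo} whenever possible. If $\family$ contains a singleton $\{v\}$, every other face of $\family$ must contain $v$, so $\family \subseteq \st_\C(v)$. If the minimum face size is two, pick an edge $\{a, b\} \in \family$; every face of $\family$ intersects $\{a, b\}$ and hence contains $a$ or $b$, giving $\family \subseteq \st_\C(a) \cup \st_\C(b)$, and Theorem~\ref{thm:maintwo} provides the required bound.

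So we may assume every face of $\family$ is a triangle. Since each triangle of $\family$ is then itself a transversal of $\family$, the transversal number $\tau(\family)$ (the minimum size of a set meeting every face) satisfies $\tau(\family) \leq 3$. If $\tau(\family) \leq 2$, some pair of vertices $\{a, b\}$ covers $\family$, hence $\family \subseteq \st_\C(a) \cup \st_\C(b)$ and Theorem~\ref{thm:maintwo} again applies. We thus focus on the case $\tau(\family) = 3$: no pair of vertices is a transversal. Fix any $T_0 = \{x, y, z\} \in \family$; every triangle of $\family$ intersects $T_0$, and the assumption $\tau(\family) = 3$ forces each of the classes $\family_w := \{T \in \family : T \cap T_0 = \{w\}\}$ for $w \in \{x, y, z\}$ to be non-empty. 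Moreover, for $T \in \family_x$ and $T' \in \family_y$, the required intersection $T \cap T'$ cannot meet $T_0$, so it consists of a vertex outside $T_0$; the analogous statement holds for the other two pairs.

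The main obstacle is to conclude from this structural information that some $w \in T_0$ satisfies $|\st_\C(w)| \geq |\family|$. The approach I would take is to partition $\family$ into $\{T_0\}$, the three classes meeting $T_0$ in two vertices, and the three classes $\family_x, \family_y, \family_z$, then expand $|\st_\C(w)| = 1 + (\text{edges through } w \text{ in } \C) + (\text{triangles through } w \text{ in } \family)$ and compare term by term with $|\family|$. The cross-intersections between $\family_x, \family_y, \family_z$ force shared vertices outside $T_0$, which populate the links of $x, y, z$ in $\C$ with additional edges. A careful accounting, leveraging that these cross-intersections occur for all three pairs, should show that at least one of $|\st_\C(x)|, |\st_\C(y)|, |\st_\C(z)|$ dominates $|\family|$.
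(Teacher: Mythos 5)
Your reductions are sound and match the paper's opening moves: a singleton forces $\family\subseteq\st_\C(v)$, an edge (or more generally a $2$-element transversal) forces $\family\subseteq\st_\C(a)\cup\st_\C(b)$ and hands the problem to Theorem~\ref{thm:maintwo}. The observation that each class $\family_w$ is non-empty when no pair of vertices covers $\family$, and that cross-intersections between these classes avoid $T_0$, is also correct. But everything up to that point is the easy part of the theorem. The case you are left with --- $\family$ consists only of triangles and has covering number $3$ --- is precisely where the content lies, and your proposal stops at ``a careful accounting \ldots{} should show that at least one of $|\st_\C(x)|,|\st_\C(y)|,|\st_\C(z)|$ dominates $|\family|$.'' That is a statement of hope, not an argument, and it is not clear it can be carried out by a local, term-by-term comparison around a single vertex of $T_0$: the edges of $\C$ through $w$ are determined by which triangles of $\family$ contain $w$, so without further structure the star of $w$ need not visibly absorb the triangles missing $w$.

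For comparison, the paper resolves this case by a global structural analysis rather than a direct count. Using Lemma~\ref{lemma:all} it first rules out (by exhibiting an at-least-as-large intersecting family containing an edge, which reduces back to Theorem~\ref{thm:maintwo}) the situations where some triangle has at most one triangle dangling at a vertex, where no two triangles share an edge, or where the underlying graph is incomplete; the last of these requires the flip construction of Lemma~\ref{lemma:flip}. What survives is shown to live on exactly six vertices, at which point $|\family|\le 10$ because an intersecting family contains at most one triangle from each of the ten complementary pairs in $\binom{6}{3}$, while $\st_\C(c)$ is checked to contain at least ten faces. Note that the crucial bound $|\family|\le 10$ comes from this complementary-pair argument, not from counting faces through any one vertex. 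To complete your proof you would need to supply an argument of comparable strength for the $\tau(\family)=3$ case; as written, the proposal has a genuine gap exactly there.
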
 

Equivalently, this result settles Chv\'atal's conjecture for rank at most three.
It was recently reproven by Czabarka, Hurlbert and Kamat~\cite[Theorem 1.4]{czabarka2017chv}.
We thank G. Hurlbert for pointing us towards reference \cite{Ste1974}.

Our proofs are inspired by our recent work with Stump on a related EKR problem~\cite{OSSS2018}.

\section{Intersecting families contained in two stars}
\label{sec:sizetwo}


\begin{lemma}
  \label{lemma:flip}
  Let~$\family$ be an intersecting family in~$\C$. 
  Let $a,b,v$ be three vertices of~$\C$ and assume that every $B\in \family$
  with $v\in B$ intersects $\{a, b\}$.
    Define
  \begin{align*}
    R_a(v) &:= \{ B\in \family : a,v\in B, b\not\in B, B\setminus v\not\in \family\},\\
    R_b(v) &:= \{ B\in \family : b,v\in B, a\not\in B, B\setminus v\not\in \family\}.
   \end{align*}
  Then,
$ 
    \family' := \family \setminus R_b(v) \cup \{B\setminus v : B \in R_a(v)\}
$ 
  is also an intersecting family.
\end{lemma}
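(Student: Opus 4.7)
The plan is to verify directly that every pair of sets in $\family'$ intersects. I would partition $\family'$ into two kinds of elements: type (i) the sets inherited from $\family$, namely the members of $\family \setminus R_b(v)$, and type (ii) the new sets of the form $B \setminus v$ with $B \in R_a(v)$. Then there are three kinds of pairs to check, and I would dispatch them in order of increasing difficulty.

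Pairs of type (i)-(i) intersect for free, since both sets already lie in the intersecting family $\family$. Pairs of type (ii)-(ii) are almost as easy: every $B \in R_a(v)$ contains $a$, and $a \neq v$, so any two reduced sets $B_1 \setminus v$ and $B_2 \setminus v$ still share the vertex $a$. The work of the lemma therefore concentrates on the mixed pairs.

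The main obstacle, and the step I would spend the most care on, is the (i)-(ii) case: given $C \in \family \setminus R_b(v)$ and $B \in R_a(v)$, I must show $C \cap (B \setminus v) \neq \emptyset$. Since $C$ and $B$ both lie in the intersecting family $\family$, their intersection is non-empty, and if $C \cap B$ contains anything other than $v$ we are done. So the only obstruction is the case $C \cap B = \{v\}$, and my goal would be to rule this case out by showing that it forces $C$ to lie in the forbidden set $R_b(v)$.

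To rule it out, I would proceed as follows. The equality $C \cap B = \{v\}$ implies $v \in C$, so the standing hypothesis on $\family$ gives $\{a,b\} \cap C \neq \emptyset$. Because $a \in B$ and $C \cap B = \{v\} \not\ni a$, we cannot have $a \in C$, and hence $b \in C$. Next, if $C \setminus v$ were a member of $\family$, then $(C \setminus v) \cap B = (C \cap B) \setminus v = \emptyset$ would contradict that $\family$ is intersecting; so $C \setminus v \notin \family$. Together with $v \in C$, $b \in C$, $a \notin C$, this exactly says $C \in R_b(v)$, contradicting $C \in \family \setminus R_b(v)$. This completes the verification, and the three cases together prove that $\family'$ is intersecting. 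Note that $\family' \subseteq \C$ is automatic because each $B \setminus v$ is a subset of $B \in \C$ and $\C$ is a downset, so no additional argument is needed there.
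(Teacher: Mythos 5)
Your proposal is correct and follows essentially the same route as the paper: the same three-way split into old--old, new--new, and mixed pairs, with the mixed case resolved by showing that a set meeting $B \in R_a(v)$ only in $v$ would be forced into $R_b(v)$ (the paper phrases this as a forward case analysis on $B_1$, you as a contradiction, but the content is identical).
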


\begin{proof}
  All sets in $\{B\setminus v : B \in R_a(v)\}$ intersect one another since they all contain $a$. 
  We thus only need to show that  every $B_1\in \family \setminus R_b(v)$ intersects every $B_2\in R_a(v)$  in an element different from $v$. 
  If $v\not\in B_1$ this is obvious since $B_1$ and $B_2$ are both in $\family$ and thus they meet. 
  If $a\in B_1$  this is obvious too, since then $a \in B_1\cap B_2$. 
  Hence, assume $B_1$ contains $v$ but not $a$. 
  Our hypotheses imply that $b \in B_1$  and since $B_1\not\in R_b(v)$ we have that $B_1\setminus {v} \in \family$. 
  Thus, $(B_1\setminus {v})\cap B_2$ indeed meet.
\end{proof}

\Cref{thm:maintwo} is a direct consequence of the following statement.

\begin{corollary}
\label{coro:sizetwo}
Let~$\family \subset \C$ be an intersecting family such that $\family\subset \st(a) \cup \st(b)$ and neither $\st(a)$ nor $\st(b)$ contains $\family$. Then there exists an intersecting family $\family'\subset \C$, $\family'\subset \st(a) \cup \st(b)$, such that either $|\family'| > |\family|$ or $|\family'| = |\family|$ but then $\family'$ has smaller average size of elements than~$\family$.
\end{corollary}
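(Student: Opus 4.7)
The plan is to apply Lemma~\ref{lemma:flip} (together with its version obtained by swapping the roles of $a$ and $b$, which is legitimate since the hypothesis of the lemma is symmetric in the two vertices) at a carefully chosen vertex $v\neq a,b$. In our setting the hypothesis of the lemma is automatic: $\family\subset\st(a)\cup\st(b)$ already forces every $B\in\family$ to meet $\{a,b\}$. Applying the lemma in both directions yields two candidate intersecting families
\begin{align*}
\family_v^{ba} &:= \family\setminus R_b(v)\cup\{B\setminus v : B\in R_a(v)\},\\
\family_v^{ab} &:= \family\setminus R_a(v)\cup\{B\setminus v : B\in R_b(v)\}.
\end{align*}
Since $v\neq a,b$, each newly produced set $B\setminus v$ still contains $a$ or $b$, so both candidates lie in $\st(a)\cup\st(b)$ (and in $\C$, as $\C$ is a downset).

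To choose $v$, I would pick $B_0\in\family$ of minimum cardinality among those with $b\in B_0$ and $a\notin B_0$; such a $B_0$ exists because $\family\not\subset\st(a)$. One first notes that $|B_0|\geq 2$: otherwise $B_0=\{b\}$ would be disjoint from any member of $\family\setminus\st(b)$, which is nonempty since $\family\not\subset\st(b)$, contradicting the intersecting property. Fix any $v\in B_0\setminus\{b\}$; as $a\notin B_0$, this automatically gives $v\neq a,b$. By minimality of $B_0$, the set $B_0\setminus v$ (still containing $b$ and not $a$) cannot belong to $\family$, so $B_0\in R_b(v)$ and in particular $R_b(v)\neq\emptyset$.

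The main calculation then compares the two flips. Their cardinality changes are $|R_a(v)|-|R_b(v)|$ and $|R_b(v)|-|R_a(v)|$, respectively, while a direct bookkeeping shows that the corresponding total-size changes $\Delta^{ba}$ and $\Delta^{ab}$ satisfy the identity
\[
\Delta^{ba}+\Delta^{ab}=-\bigl(|R_a(v)|+|R_b(v)|\bigr)<0,
\]
where the strict inequality uses $R_b(v)\neq\emptyset$. Hence, if $|R_a(v)|\neq|R_b(v)|$ then one of the two flips strictly increases $|\family|$; and if $|R_a(v)|=|R_b(v)|$ then both flips preserve the cardinality, but at least one strictly decreases the total size and hence the average size. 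In either case, taking $\family'$ to be whichever flip works yields the corollary.

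The main obstacle I anticipate is ensuring that the chosen $v$ actually produces a non-trivial flip, i.e.\ that $R_a(v)\cup R_b(v)\neq\emptyset$; the minimality argument for $B_0$ is precisely what forces $R_b(v)\neq\emptyset$ and drives the sign of $\Delta^{ba}+\Delta^{ab}$.
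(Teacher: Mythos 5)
Your proof is correct and follows essentially the same route as the paper: choose $v$ inside a minimal face meeting exactly one of $a,b$ so that the corresponding $R$-set is nonempty, then apply Lemma~\ref{lemma:flip} in whichever direction gains cardinality, falling back on total size in the tied case. The only (cosmetic) difference is that you symmetrize via the identity $\Delta^{ba}+\Delta^{ab}=-\left(|R_a(v)|+|R_b(v)|\right)$ where the paper instead fixes the favorable direction by a without-loss-of-generality assumption.
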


\begin{proof}
We first claim that there exists a vertex $v$ in~$\C$ such that (at least) one of the sets $R_a(v)$ and $R_b(v)$ of the previous lemma is not empty. For this,  let $B$ be a minimal face in $\family$ containing $a$ but not $b$ 
(it exists, or else the condition $\family\subset \st(a) \cup \st(b)$ implies $\family \subset \st(b)$). 
If $B=\{a\}$ then $\family \subset \st(a)$. 
If $B\ne\{a\}$ then for each $ v\in B\setminus a$ we have $R_a(v)\ne \emptyset$.

Assume that either $|R_a(v)| > |R_b(v)|$ or
$|R_a(v)| = |R_b(v)|$ and $|R_a(v)|$ has average size of sets smaller or equal than $|R_b(v)|$. This is no loss of generality since $|R_a(v)| \le |R_b(v)|$ implies $R_b(v)$ is not empty and we can exchange the roles of $a$ and $b$.

Hence, we have $|\family'| = |\family| - |R_b(v)| + |R_a(v)| \ge |\family|$ with equality only if $|R_a(v)| = |R_b(v)|$. In this case, since  
$|R_a(v)|$ has average size of sets smaller or equal than  $|R_b(v)|$ and we substitute the sets of $R_b(v)$ with sets of size smaller than those of $R_a(v)$, the average size of sets in $\family'$ is smaller than in $\family$.
\end{proof}

\section{Intersecting families of rank three}
\label{sec:triangles}

To simplify notation, in what follows we omit braces when referring to a subset of the ground set and write, e.~g., $abc$ instead of $\{a,b,c\}$.
In part (1) of the following statement, given a triangle $abc \in \family$ we say that a second triangle $\tau\in \family$ is \defn{dangling} from $abc$ at one of the vertices $x\in abc$ if $\tau\cap abc=x$.

\begin{lemma}
\label{lemma:all}
Let $\family$ be an intersecting family consisting only of triangles. If any of the following conditions is satisfied, then there exists an intesecting family of size at least $|\family|$ containing an edge or vertex:
\begin{enumerate}
  \item Some triangle in $\family$ has one or no triangles dangling at some vertex;
  \item No two triangles in $\family$ share an edge;
  \item The graph of the complex generated by $\family$ is not complete.
\end{enumerate}

\end{lemma}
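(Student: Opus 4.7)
The plan is to treat each of the three conditions separately, using Theorem~\ref{thm:maintwo} and Corollary~\ref{coro:sizetwo} as the main tools.

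For condition~(1), let $T = abc \in \family$ have at most one dangling triangle at vertex~$a$, and call it~$\tau$ when it exists. Any $\sigma \in \family \setminus \{T\}$ with $\sigma \cap T = \{a\}$ dangles at~$a$ and hence equals $\tau$, so every $\sigma \in \family \setminus \{\tau\}$ meets $\{b,c\}$, giving $\family \setminus \{\tau\} \subseteq \st_\C(b) \cup \st_\C(c)$. Theorem~\ref{thm:maintwo} then yields $\max(|\st_\C(b)|,|\st_\C(c)|) \geq |\family \setminus \{\tau\}|$. If $\tau$ does not exist, the larger star already has size $\geq |\family|$ and contains a vertex, and we are done. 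If $\tau$ exists, I plan to close the one-unit gap by iterating Corollary~\ref{coro:sizetwo} on $\family \setminus \{\tau\}$ to obtain a family $\family'' \subseteq \st_\C(b)$ (say) with $|\family''| \geq |\family| - 1$; then $\family'' \cup \{\{b\}\}$ is intersecting (all sets contain~$b$), contains the vertex~$\{b\}$, and its size reaches $|\family|$ since $\{b\}$ is a vertex and therefore does not lie in the original triangle-only family $\family \setminus \{\tau\}$.

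For condition~(2), fix any $T = abc \in \family$. Since no two triangles share an edge, every $\sigma \in \family \setminus \{T\}$ meets $T$ in exactly one vertex, partitioning $\family \setminus \{T\} = \family_a \sqcup \family_b \sqcup \family_c$. For the same reason, triangles in $\family_x$ pairwise share only~$x$, so they contribute $2|\family_x|$ distinct neighbors of~$x$ in~$\C$ beyond the two neighbors coming from~$T$. Counting the vertex, edges, and triangles of~$\st_\C(x)$,
\[
|\st_\C(x)| \;\geq\; 1 + (2 + 2|\family_x|) + (1 + |\family_x|) \;=\; 4 + 3|\family_x|.
\]
By pigeonhole some $x \in \{a, b, c\}$ satisfies $|\family_x| \geq (|\family|-1)/3$, hence $|\st_\C(x)| \geq |\family| + 3$. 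This star is the required intersecting family, and it contains the vertex~$\{x\}$.

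For condition~(3), let $u, v$ be two vertices of~$\C$ with $uv \notin \C$, so that no triangle of~$\family$ contains both. Pick any triangle $T = \{u, p, q\} \in \family$ through~$u$; every triangle dangling at~$u$ in~$T$ avoids $p, q$ and $v$ while still having to intersect every triangle of $\family$ through~$v$. I would argue that either some triangle of $\family$ has at most one dangling triangle at one of its vertices (invoking condition~(1)), or that the constraints imposed by the non-edge $uv$ force $\family \subseteq \st_\C(u) \cup \st_\C(v)$, so that Theorem~\ref{thm:maintwo} applies directly.

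The main obstacle is condition~(3): the non-edge $uv$ is a global constraint, and translating it into a local reduction to condition~(1), condition~(2), or a union of two stars requires carefully ruling out triangles of $\family$ that contain neither~$u$ nor~$v$. The ``one dangling'' sub-case of condition~(1) is also delicate, since Theorem~\ref{thm:maintwo} falls short of the goal by one; that gap is bridged by the observation that the ambient star always contains its centre vertex, a face of rank lower than three and hence outside the triangle family~$\family$.
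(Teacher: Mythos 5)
Your part (2) is correct and is essentially the paper's own counting argument (the paper counts inside a hand-picked subfamily of $\st_\C(a)$ rather than the whole star, but the estimate is the same). The other two parts have genuine gaps. In part (1), the ``one dangling triangle'' case is not closed: after iterating \Cref{coro:sizetwo} on $\family\setminus\{\tau\}$, the resulting family $\family''$ no longer consists of the original triangles --- each flip replaces some $B$ by $B\setminus v$, so $\family''$ may contain edges and even the singleton $\{b\}$ itself. Your justification that $\{b\}\notin\family''$ (namely that $\{b\}$ does not lie in the \emph{original} triangle-only family $\family\setminus\{\tau\}$) is about the wrong family, so the size of $\family''\cup\{\{b\}\}$ is not guaranteed to reach $|\family|$. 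All of this machinery is unnecessary: you already observed that every $\sigma\in\family\setminus\{\tau\}$ meets $\{b,c\}$, hence $(\family\setminus\{\tau\})\cup\{bc\}$ is itself an intersecting family of size $|\family|$ containing an edge. That one-line swap is exactly the paper's proof of part (1).

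For part (3) you do not give a proof: you assert a dichotomy (either some triangle has at most one dangling triangle at a vertex, or the non-edge forces $\family\subseteq\st_\C(u)\cup\st_\C(v)$) without establishing either horn, and you yourself flag that triangles containing neither $u$ nor $v$ are the obstacle. That obstacle is real --- nothing prevents such triangles, so the second horn will generally fail, and no argument is offered for why the first must then hold. The paper's route is different and is the one to follow: take a triangle $abc\in\family$ through one endpoint $c$ of the non-edge $cv$; every member of $\family$ containing $v$ must meet $abc$ inside $\{a,b\}$, which is exactly the hypothesis of \Cref{lemma:flip} for the triple $a,b,v$. If the set $S_v$ of vertices of $abc$ usable by triangles through $v$ is a single vertex $a$, one adds the edges $ay$ for each $avy\in\family$; if $S_v=\{a,b\}$ and one of $R_a(v)$, $R_b(v)$ is non-empty, the flip yields an intersecting family of size at least $|\family|$ containing edges; and if both are empty, the only member of $\family$ through $v$ is $abv$ and one may add the edge $ab$. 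Your proposal never invokes \Cref{lemma:flip}, which is the key tool here.
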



\begin{proof}
Throughout the proof, let $abc$ be a triangle in $\family$.

For part (1), if for some vertex, say $a$, there is only one triangle $\tau\in\family$ dangling at $a$, let $\family' = \family \setminus \{\tau \} \cup \{bc\}$. If there is none, just add $bc$ to $\family$.

\smallskip

For part (2), assume without loss of generality that among the triangles of $\family$ there are at least as many containing $a$ than $b$ or $c$. 
Let $\family'$ consist of the triangle $abc$ plus all other triangles $axy\in\family$ 
together with their edges $ax$ and $ay$. Then $\family'\subsetneq \st_\C(a)$ and $|\family'|\ge |\family|$ since all edges $ax$ and $ay$ are distinct. 

\smallskip

For part (3), let $c$ and $v$ be vertices not spanning an edge. Let $abc \in \family$ be a triangle containing $c$, and let $S_v=\left \{x\in abc : \exists \, y \text{ with } vxy \in \family \right \}$. By the hypothesis, $S_v \subset ab$. The assumption that~$\C$ is generated by the faces of $\family$ implies that $S_v \neq \emptyset$, so we assume $a \subset S_v$.
If $S_v = a$ then we add $ay$ to $\family$ for each $avy \in \family$.
Hence, assume for the rest that $S_v = ab$. Note that every element of $\family$ containing $v$ must contain either $a$ or $b$ since $\family$ is intersecting. In particular, we can apply \Cref{lemma:flip}. If one of $R_a(v)$ and $R_b(v)$ is non-empty this yields an intersecting family $\family'$, $|\family'| \ge |\family|$, containing edges. If both $R_a(v)$ and $R_b(v)$ are empty, the only element of $\family$ containing $v$ is $abv$ and we can add $ab$ to $\family$. 
\end{proof}

%
%




\begin{proof}[Proof of \Cref{thm:main_intro}]
Chv\'atal's conjecture holds when $\family$ contains a vertex (trivial) or an edge (\Cref{thm:maintwo}; note that if $ab \in \family$, then trivially $\family\subset \st(a) \cup \st(b)$). Thus, we can assume that $\family$ consists entirely of triangles and, by \Cref{lemma:all}, that it does not satisfy any of the three conditions listed in that lemma.

In particular, by part (2) of the lemma, $\family$ contains two triangles $abc$ and $abx$ sharing an edge. Observe that all triangles dangling from $abc$ at $c$ must contain $x$ since a triangle dangling at $c$ and not containing $x$ does not intersect $abx$. Moreover:

  \begin{itemize}[leftmargin=.7cm,noitemsep,topsep=0pt]
    \item {\em There are exactly two such triangles, say $cxy$ and $cxz$.} There are at least two by part (1) of \Cref{lemma:all}. If there is a third triangle $cxv$ dangling at $c$, then every triangle in $\family$ must intersect $cx$ (otherwise it must contain $v$, $y$ and $z$ and intersect $abc$, a contradiction). Hence, we can add $cx$ to $\family$ and apply \Cref{thm:maintwo}.
    \item {\em The only vertices of $\C$ are $a,b,c,x,y,z$.} Assume there exists another vertex $v \in \C$. By part (3) of \Cref{lemma:all} the edge $cv$ is contained in some triangle $\tau \in \family$. By the previous item, $\tau$ is not dangling from $abc$ at $c$ so without loss of generality $\tau = acv$.  Now, every triangle $\sigma \in \family$ dangling at $b$ must contain both $v$ and either $x$ or both $y$ and $z$. Since the latter is impossible, $bvx$ is the only possible triangle dangling at $b$, contradicting part (1) of \Cref{lemma:all}.
  \end{itemize} 
  
 Once we know there are exactly six vertices, observe that at most half of the $\binom63=20$ triangles on six vertices, one from each complementary pair, can be in $\family$, so $|\family|\le 10$. But the above implies that $\st_\C(c)$ contains at least the following 10 faces: the three triangles $abc$, $cxy$, $cxz$ plus at least another triangle dangling from $cxy$ at $c$, the five edges $ca$, $cb$, $cx$, $cy$, $cz$, and $c$ itself.
\end{proof}


\bibliographystyle{plain}
\bibliography{bibliography}

\begin{thebibliography}{1}

\bibitem{Chv1974}
V.~Chv{\'a}tal.
\newblock Intersecting families of edges in hypergraphs having the hereditary
  property.
\newblock In {\em Hypergraph Seminar: Ohio State University 1972}, vol. 411
  of {\em Lect. Notes Math.}, pp.~61--66. Springer Berlin Heidelberg, 1974.

\bibitem{czabarka2017chv}
E.~Czabarka, G.~Hurlbert, and V.~Kamat.
\newblock Chv{\'a}tal's conjecture for downsets of small rank, 2017.
\newblock Preprint, \href{https://arxiv.org/abs/1703.00494}{arXiv:1703.00494}.

\bibitem{EKR1961}
P.~Erd{\H o}s, C.~Ko, and R.~Rado.
\newblock Intersection theorems for systems of finite sets.
\newblock {\em Quart. J. Math. Oxford Ser. (2)}, 12:313--320, 1961.

\bibitem{Kle1974}
D.~J. Kleitman and T.~L. Magnanti.
\newblock On the number of latent subsets of intersecting collections.
\newblock {\em J. Combinatorial Theory Ser. A}, 16:215--220, 1974.

\bibitem{OSSS2018}
Jorge~A. Olarte, Francisco Santos, Jonathan Spreer, and Christian Stump.
\newblock The ekr property for flag pure simplicial complexes without boundary,
  2017.
\newblock Preprint, \href{https://arxiv.org/abs/1710.02518}{arXiv:1710.02518}.

\bibitem{Ste1974}
F.~Sterboul.
\newblock Sur une conjecture de {V}. {C}hv\'atal.
\newblock In {\em Hypergraph Seminar: Ohio State University 1972}, vol. 411
  of {\em Lect. Notes Math.}, pp.~152--164. Springer Berlin Heidelberg, 1974.

\end{thebibliography}

\end{document}